\theoremstyle{plain}
\newtheorem{theorem}{Theorem}[section]
\newtheorem{lemma}[theorem]{Lemma}
\theoremstyle{definition}
\newtheorem{definition}[theorem]{Definition}
\numberwithin{equation}{section}
\newcommand{\AlignFootnote}[1]{%
    \ifmeasuring@
    \else
        \footnote{#1}%
    \fi
}
\title{On soluble groups in which commutators have prime power order}
\author[M. Figueiredo]{Mateus Figueiredo} 
\address{Mateus Figueiredo: Department of Mathematics, University of Brasilia, Brasilia DF, Brazil}
\email{mt-figueiredo@hotmail.com}
\author[P. Shumyatsky]{Pavel Shumyatsky} 
\address{Pavel Shumyatsky: Department of Mathematics, University of Brasilia, Brasilia DF, Brazil}
\email{pavel@unb.br}
\thanks{The work of the second author was supported by FAPDF and CNPq}
\keywords{Finite groups, commutators}
\subjclass[2020]{ 20F12, 20E34}
\begin{document}

\maketitle

\begin{abstract}
The article deals with finite groups  in which commutators have prime power order (CPPO-groups). We show that if $G$ is a soluble CPPO-group, then the order of the commutator subgroup $G'$ is divisible by at most two primes. This improves an earlier result saying that the order of the commutator subgroup $G'$ is divisible by at most three primes.
\end{abstract}

\section{Introduction}

Finite groups in which every element has prime power order (EPPO-groups for short) are nowadays fairly well understood. A rather detailed description of such groups (including infinite locally finite EPPO-groups) can be found in \cite{delgado}. In particular, a finite soluble EPPO-group has Fitting height at most 3 and its order is divisible by at most two primes \cite{higman}. 

The recent article \cite{MS1} deals with finite groups  in which commutators have prime power order (CPPO-groups). Roughly, the results obtained in \cite{MS1} show that if $G$ is a CPPO-group, then the structure of the commutator subgroup $G'$ is similar to that of an EPPO-group. For instance, the Fitting height of a soluble CPPO-group $G$ is at most 3 and the order of the commutator subgroup $G'$ is divisible by at most three primes.

The goal of this short note is to improve the above result and establish the following theorem.

\begin{theorem}\label{main}
Let $G$ be a soluble CPPO-group. Then the order of the commutator subgroup $G'$ is divisible by at most two primes.
\end{theorem}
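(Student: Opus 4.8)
The plan is to argue by contradiction, taking $G$ to be a soluble CPPO-group with $|\pi(G')|$ as small as possible subject to being at least $3$; by the result of \cite{MS1} recalled above we then have $\pi(G')=\{p,q,r\}$ with exactly three primes, and $G$ has Fitting height at most $3$. Since subgroups and quotients of CPPO-groups are again CPPO, minimality of $|G|$ forces every proper section of $G$ to have commutator subgroup divisible by at most two primes. The first reductions extract from this a convenient normal subgroup: if $N$ is a minimal normal subgroup of $G$ contained in $G'$ (such $N$ exists, as $G'\neq1$), then $G/N$ is a proper CPPO-quotient, so $\pi(G'/N)=\pi((G/N)')$ has at most two elements; comparing with $\pi(G')=\{p,q,r\}$ this forces $N$ to be, say, an elementary abelian $r$-group which contains, hence equals, a full Sylow $r$-subgroup of $G'$, with $G'/N$ a $\{p,q\}$-group. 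Thus $R:=O_r(G')$ is elementary abelian, normal in $G$, a minimal normal subgroup of $G$, and $G'=R\rtimes K$ for a Hall $\{p,q\}$-subgroup $K$ of $G'$ by Schur--Zassenhaus.

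The main engine is an elementary ``combining'' observation that I would isolate as a lemma: in any CPPO-group, if $x$ is an element and $A,B$ are subgroups with $[A,B]=1$ such that the non-trivial values of $[x,a]$ ($a\in A$) and of $[x,b]$ ($b\in B$) have coprime orders, then $x$ cannot act non-trivially on both $A$ and $B$, since for suitable $a,b$ one gets $[x,ab]=[x,a][x,b]$ of order divisible by two distinct primes; likewise $[\langle A,B\rangle,\langle A,B\rangle]=A'\times B'$ must be a $p$-group for a single prime $p$. Applied to the Sylow decomposition of a nilpotent normal subgroup $N\trianglelefteq G$, this shows $N'$ is a $p$-group and that each element of $G$ moves at most one Sylow subgroup of $N$; in particular $G'$ cannot be nilpotent (three commuting Sylows of $G'$ would combine two commutators of different primes), so $F(G')<G'$ and $F(G')\le F(G)$, and $F(G)=O_{\pi_0'}(G)\times O_{\pi_0}(G)$ with $O_{\pi_0'}(G)$ abelian. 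The same structural dichotomy applies to $F(G/F(G))$ inside $G/F(G)$, and to $G/F_2(G)$, using that this last quotient is nilpotent by the Fitting height bound; so each layer of the Fitting series contributes ``one bad prime plus an abelian part''.

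With this in hand the argument proceeds by locating $p,q,r$ along the Fitting series $1\le F_1\le F_2\le G$ and showing that the action of each layer on the lower ones is so constrained — again by the combining lemma, and by coprime fixed-point analysis, since a non-trivial fixed point of a $\pi_i$-element on a section of type $\pi_j$ manufactures a commutator of order divisible by $\pi_i\pi_j$ — that the only surviving configurations are ``Frobenius-like'': the relevant acting subgroup acts fixed-point-freely and is therefore a Frobenius complement. One then invokes the rigidity of Frobenius complements (all Sylow subgroups cyclic or generalized quaternion; a unique central involution when the order is even) together with the CPPO-condition on $G$ and the fact that this complement is tied to $G'$ — it lies in, or maps onto part of, $G'$, and $R$ is a minimal normal $G$-module — to reach a contradiction; the degenerate ``central'' cases, where $R$ or $O_{\pi_0'}(G)$ lies in $Z(G)$, are handled separately.

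The step I expect to be the main obstacle is exactly this final case analysis. Handling the Frobenius-like configurations is delicate because the combining trick no longer applies once the relevant subgroups fail to commute, so one must exploit the detailed structure of Frobenius complements and the precise position of the acting subgroup relative to $G'$; and the central case is awkward because the CPPO property is not inherited by central extensions, so one cannot simply quote the two-prime conclusion for $G/R$ but must instead track how the prime-power orders of commutators in $G$ relate, through the central subgroup $R$, to those in $G/R$.
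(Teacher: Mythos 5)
Your opening reductions are sound and close in spirit to the paper's: minimal counterexample, a minimal normal subgroup of $G$ inside $G'$ that is forced to be a full elementary abelian Sylow subgroup of $G'$ for one of the three primes, and the ``combining'' observation that in a CPPO-group a product of commuting commutators of coprime orders such as $[a_1x,a_2b]=[a_1,a_2][x,b]$ is impossible --- this is exactly the elementary engine the paper's proof runs on. One caveat already at this stage: your claim that $G'$ cannot be nilpotent is not justified as written, because elements of the Sylow subgroups of $G'$ need not themselves be commutators of $G$, so the combining trick does not apply to them directly; in the paper this point is covered instead by the inequality $|\pi(G')|\leqslant h(G)$ of Lemma \ref{MSmain}, which rests on the analysis in \cite{MS1}.

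The genuine gap is the final case analysis, which you yourself flag as the main obstacle, and the route you propose for it does not work. You assert that coprime fixed-point analysis forces the surviving actions to be fixed-point-free, hence Frobenius, and that the rigidity of Frobenius complements then finishes the argument. But a nontrivial fixed point of a $\pi_i$-element on a $\pi_j$-section does not by itself manufacture a mixed-order commutator (the commutator with a fixed point is trivial), and in the actual critical configuration the action is emphatically not fixed-point-free: after all reductions one is left with a cyclic $r$-group $\langle a\rangle$ acting on a nonabelian $q$-group $T_2$ of class $2$ with $C_{T_2}(a)=\Phi(T_2)=Z(T_2)=T_2'\neq 1$, which in turn acts on the elementary abelian $p$-group at the bottom. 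Ruling out precisely this non-Frobenius configuration is the real content of the theorem, and it requires tools with no counterpart in your sketch: Lemmas \ref{abeliantower} and \ref{casoextra} (towers $(P_1,P_2,P_3)$ with $P_2$ abelian noncyclic, respectively extraspecial with $C_{P_2}(P_1)=\Phi(P_2)$, the latter forcing $P_2\cong Q_8$), together with the structural work showing that the middle layer is a Thompson-critical subgroup of class at most $2$ with cyclic centre, is extraspecial of exponent $q$ when $q$ is odd, and is either generalized quaternion or generated by involutions (hence again extraspecial) when $q=2$. Without these lemmas or a genuine substitute for them, the proof does not close.
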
 
In the next section we collect auxiliary results needed for the proof of Theorem \ref{main}. The proof is given in Section 3.

\section{Preliminaries}

If $A$ is a group of automorphisms of a group $G$, the subgroup generated by all elements of the form $g^{-1}g^\alpha$ with $g\in G$ and $\alpha\in A$ is denoted by $[G,A]$. It is well known that the subgroup $[G,A]$ is an $A$-invariant normal subgroup in $G$. We write $C_G(A)$ for the centralizer  of $A$ in $G$. If $G$ and $A$ are finite and $(|G|,|A|)=1$, we say that $A$ is a group of coprime automorphisms of $G$. Throughout, $\pi(G)$ denotes the set of prime divisors of the order of $G$. We write $\Phi(G)$ to denote the Frattini subgroup of $G$. The following lemma is a collection of well-known facts on coprime actions (see for example \cite[Ch. 5 and Ch. 6]{Gorenstein}). In the sequel the lemma will be used without explicit references. 

\begin{lemma}\label{cc}
Let  $A$ be a group of coprime automorphisms of a finite group $G$. Then
\begin{enumerate}
\item[(i)] $G=[G,A]C_{G}(A)$. If $G$ is abelian, then $G=[G,A]\oplus C_{G}(A)$.
\item[(ii)] $[G,A,A]=[G,A]$. 
\item[(iii)] $C_{G/N}(A)=NC_G(A)/N$ for any $A$-invariant normal subgroup $N$ of $G$.
\item[(iv)] If $[G/\Phi(G),A]=1$, then $[G,A]=1$
\end{enumerate}
\end{lemma}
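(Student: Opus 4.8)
The plan is to treat part (iii) as the engine of the whole lemma and to deduce (i), (ii) and (iv) from it by short arguments. For (iii) I would first record the trivial inclusion $C_G(A)N/N \subseteq C_{G/N}(A)$ and then establish the reverse inclusion by induction on $|N|$. The base case is that of abelian $N$: given $x$ with $[x,a]\in N$ for every $a\in A$, I would set $n_a := x^{-1}x^a \in N$ and check that $a \mapsto n_a$ is a (twisted) $1$-cocycle for the $A$-module $N$; since $(|A|,|N|)=1$ the integer $|A|$ is invertible on $N$, so the standard averaging exhibits this cocycle as a coboundary (equivalently $H^1(A,N)=0$), which produces an $A$-fixed element in the coset $xN$ and hence places $xN$ in $C_G(A)N/N$. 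For the inductive step I would choose a nontrivial proper $A$-invariant normal subgroup $M$ of $G$ with $M \le N$, apply the inductive hypothesis to $M \trianglelefteq G$ and to $N/M \trianglelefteq G/M$, and chain the two resulting identities through the isomorphism $(G/M)/(N/M)\cong G/N$ to recover $C_{G/N}(A)=C_G(A)N/N$.

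Once (iii) is available, (i) is immediate: in $\overline G := G/[G,A]$ the group $A$ acts trivially, so $\overline G = C_{\overline G}(A)$, and (iii) rewrites this equality as $G = [G,A]C_G(A)$. For the abelian refinement I would pass to additive notation, introduce the averaging idempotent $e := |A|^{-1}\sum_{a\in A} a$ (legitimate because $|A|$ is invertible on $G$), and verify that $eG = C_G(A)$ while $(1-e)G = [G,A]$, so that the idempotent decomposition $G = eG \oplus (1-e)G$ is exactly $G = [G,A]\oplus C_G(A)$.

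Part (ii) I would obtain by working modulo $[G,A,A]$. Writing $\overline G := G/[G,A,A]$ and $\overline K := [\overline G,A]=[G,A]/[G,A,A]$, one has $[\overline K,A]=1$ by construction; applying (i) to $\overline G$ gives $\overline G = \overline K\, C_{\overline G}(A)$, and for a generator $kc$ with $k\in\overline K$, $c\in C_{\overline G}(A)$ the identity $[kc,a]=[k,a]^{c}[c,a]=1$ forces $[\overline G,A]=1$, i.e. $\overline K=1$, which is the assertion $[G,A]=[G,A,A]$. For (iv), applying (iii) with $N=\Phi(G)$ to the hypothesis $C_{G/\Phi(G)}(A)=G/\Phi(G)$ yields $G = C_G(A)\Phi(G)$; since the Frattini subgroup consists of non-generators this collapses to $G=C_G(A)$, that is $[G,A]=1$.

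The genuine obstacle is the inductive step of (iii) in the base case where the minimal $A$-invariant normal subgroup $N$ is nonabelian: the averaging argument no longer applies, and one must invoke either Glauberman's coprime fixed-point lemma or the fact that coprimality $(|G|,|A|)=1$ forces, via Feit--Thompson, at least one of $G$ and $A$ to be soluble, so that the genuinely hard layers can be brought back to the abelian base case. Since the lemma merely collects standard coprime-action facts, in the write-up I would cite \cite[Ch.~5 and Ch.~6]{Gorenstein} for this core reduction and spell out only the brief derivations of (i), (ii) and (iv) from (iii).
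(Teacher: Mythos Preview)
Your proposal is correct and standard. Note, however, that the paper does not actually prove this lemma: it is stated as a collection of well-known coprime-action facts and simply refers the reader to \cite[Ch.~5 and Ch.~6]{Gorenstein}. Your write-up therefore goes well beyond what the paper does; the derivations of (i), (ii) and (iv) from (iii) are clean and accurate, and your explicit acknowledgement that the nonabelian minimal-$N$ case of (iii) ultimately rests on Schur--Zassenhaus/Glauberman (hence on Feit--Thompson in full generality) is exactly the point where any honest proof must invoke an external result.
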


We will require the concept of towers as introduced by Turull (see \cite{Turull}).

\begin{definition}\label{DefTurulltower}
	Let $G$ be a group. A sequence $(P_{i})_{i=1,\ldots,h}$ of subgroups of $G$ is said to be a tower of height $h$ if the following are satisfied:
	\begin{enumerate}
		\item $P_{i}$ is a $p_i$-group for all $i=1,\ldots,h$.
		\item $P_{i}$ normalizes $P_j$ for all $i<j$.
		\item Put $\overline{P_h}=P_{h}$ and $\overline{P_{i}}={P_{i}}/{C_{P_{i}}(\overline{P_{i+1}})}, \  i=1,\ldots,h-1.$ 
		Then $\overline{P_i}$ is nontrivial for all $i$.
		\item $p_i\neq p_{i+1}$ for all $i=1,\ldots,h-1$.
	\end{enumerate}
\end{definition}

The next lemma is due to Turull \cite[Lemma 1.9]{Turull}.
\begin{lemma}\label{heightequaltower} The Fitting height of a finite soluble group $G$ is the maximum of heights of towers of $G$.
\end{lemma}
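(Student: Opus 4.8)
The plan is to prove the equality by establishing the two inequalities separately: writing $t(G)$ for the maximal height of a tower in $G$ and $h(G)$ for its Fitting height, I will show $t(G)\le h(G)$ and $h(G)\le t(G)$. Both arguments run by induction and are driven by two facts about a nontrivial finite soluble group $G$: that $h(G)=1+h(G/F)$ for $F=F(G)$, and that the Fitting subgroup is self-centralizing, $C_G(F)\le F$ (so any normal nilpotent subgroup lies in $F$). The coprime-action statements of Lemma \ref{cc} will be the workhorse. A preliminary observation that I will use repeatedly is that \emph{removing the top entry} $P_1$ of a tower $(P_1,\dots,P_h)$ leaves a genuine tower $(P_2,\dots,P_h)$: conditions (i), (ii), (iv) are inherited, and each $\overline{P_i}$ with $i\ge 2$ is \emph{literally} the same quotient as before, since its definition involves only $P_i,\dots,P_h$. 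By contrast, removing the bottom entry $P_h$ is not automatically safe, because condition (iii) is not monotone: shrinking the tail shrinks each $\overline{P_i}$ to a quotient of itself, and $P_i$ may centralize the smaller quotient. This asymmetry is exactly what makes the statement nontrivial.

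For $t(G)\le h(G)$ I argue by induction on $h$. Given a tower $(P_1,\dots,P_h)$, put $W=\langle P_1,\dots,P_h\rangle=P_1\cdots P_h$ (a subgroup, by the normalizing conditions) and $K=\langle P_2,\dots,P_h\rangle$; since $h(W)\le h(G)$ it suffices to bound $h(W)$. By the observation above, $(P_2,\dots,P_h)$ is a tower of height $h-1$ in $K$, so the inductive hypothesis gives $h(K)\ge h-1$. It then remains to show that adjoining $P_1$ raises the Fitting height, i.e. $h(W)\ge h(K)+1$. Here I use that, by induction, the tower $(P_2,\dots,P_h)$ places $\overline{P_2}$ at the top of a realization of the Fitting height of $K$, while $P_1$ is a $p_1$-group with $p_1\ne p_2$ that acts nontrivially on $\overline{P_2}$ because $\overline{P_1}\ne1$. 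The point is that a group of the coprime prime $p_1$ acting nontrivially on the top nilpotent section of $K$ cannot be absorbed into that section; concretely, if it were, Lemma \ref{cc} applied in $K/F_{h-2}(K)$ (together with $C(F)\le F$) would force $P_1$ to centralize the relevant section, contradicting $\overline{P_1}\ne1$. Hence $h(W)\ge h$.

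For $h(G)\le t(G)$ I construct a tower of height $h=h(G)$ by induction, building it from the bottom. By induction $\overline G=G/F$ carries a tower $(\overline Q_1,\dots,\overline Q_{h-1})$ of height $h-1$, which I lift to $p_i$-subgroups $Q_1,\dots,Q_{h-1}$ of $G$ realizing the same normalizing pattern (using Sylow theory and a Frattini argument to realign the lifts). Since $\overline Q_{h-1}\ne1$ we have $Q_{h-1}\not\le F$, so by $C_G(F)\le F$ the $p_{h-1}$-group $Q_{h-1}$ acts nontrivially on $F=\prod_q O_q(G)$. I then select a prime $q\ne p_{h-1}$ with $[O_q(G),Q_{h-1}]\ne1$, set $P_h=[O_q(G),Q_{h-1}]$, and append it. By Lemma \ref{cc}(ii) the action of $Q_{h-1}$ on $P_h$ is nontrivial, the other $Q_i$ normalize $P_h$ (they normalize both $O_q(G)$ and $Q_{h-1}$), and the consecutive primes $p_{h-1},q$ are distinct; thus $(Q_1,\dots,Q_{h-1},P_h)$ satisfies conditions (i)--(iv) at the new bottom.

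The main obstacle, in both directions, is the interaction of the nontriviality condition (iii) with the distinct-primes condition (iv), i.e. preventing a $p$-layer from being absorbed by an adjacent $p$-section. In the construction this surfaces as the need to choose the new bottom prime $q$ different from $p:=p_{h-1}$. Were $Q_{h-1}$ to centralize $O_q(G)$ for every $q\ne p$, it would centralize $O_{p'}(G)$; passing to $\overline G=G/O_{p'}(G)$, where $O_{p'}(\overline G)=1$, one finds that the image of $Q_{h-1}$ together with $O_p(\overline G)$ generates a $p$-group, whence (as $O_{p'}(G)$ is central modulo this $p$-structure) the section carrying $\overline Q_{h-1}$ collapses into the Fitting subgroup one level lower, contradicting $h(G/F)=h-1$. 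The symmetric difficulty in the upper bound is the non-absorption claim of the previous paragraph, and there is the further bookkeeping point that appending $P_h$ redefines $\overline{Q_{h-1}}$ as $Q_{h-1}/C_{Q_{h-1}}(P_h)$, so one must guarantee $Q_{h-2}$ still acts nontrivially on this smaller quotient; this may force the lifts $Q_i$ to be chosen compatibly, most cleanly by strengthening the inductive hypothesis. Making these collapse and non-absorption arguments precise, through repeated use of Lemma \ref{cc} and the self-centralizing property of $F$, is where the real work of the proof lies.
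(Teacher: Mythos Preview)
The paper does not prove this lemma; it merely cites Turull \cite[Lemma 1.9]{Turull}. So there is no proof in the paper to compare against, and your sketch must be assessed on its own merits.

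Your outline identifies the right two inequalities and, commendably, the central obstruction: the recursive definition of the $\overline{P_i}$ makes condition~(iii) unstable under truncation or extension of a tower. But both directions are left genuinely incomplete, as you yourself concede in the closing sentence. In the inequality $t(G)\le h(G)$, the inductive hypothesis yields only $h(K)\ge h-1$; it does \emph{not} locate $\overline{P_2}$ ``at the top of a realization of the Fitting height of $K$'', and nothing you prove links the $P_2$-section $\overline{P_2}$ to the top Fitting section $K/F_{h(K)-1}(K)$. Without that link, the non-absorption claim for $P_1$ has no foundation. In the construction, the ``bookkeeping point'' you flag is the crux rather than a detail: appending $P_h$ replaces the old bottom $\overline{Q_{h-1}}=Q_{h-1}F/F$ by $Q_{h-1}/C_{Q_{h-1}}(P_h)$, and this change propagates upward through \emph{every} $\overline{Q_i}$, not only $\overline{Q_{h-2}}$. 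Saying that one ``may'' need to strengthen the inductive hypothesis is correct --- Turull does exactly this --- but without specifying the strengthening and verifying that it survives the extension step, the induction does not close.
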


In the sequel the following lemma from \cite[Lemma 3.1]{MS1} will be useful.
\begin{lemma}\label{abeliantower}
	Let $G$ be a finite group containing a tower $(P_1,P_2,P_3)$ of abelian subgroups such that $P_1$ is cyclic, $P_2$ is noncyclic and $P_2=[P_2,P_1]$. Then $G$ has a commutator whose order is not a prime power. 
\end{lemma}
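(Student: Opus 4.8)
The plan is to exhibit a single commutator $[h,x]$ of non-prime-power order, where $x$ generates the cyclic group $P_1$ and $h$ runs over $P_2P_3$. All the actions occurring are coprime because $p_1\neq p_2\neq p_3$ and the groups are abelian. From $P_2=[P_2,P_1]$ together with Lemma~\ref{cc} one gets $C_{P_2}(P_1)=1$; writing $P_1=\langle x\rangle$ this means $C_{P_2}(x)=1$, so the map $y\mapsto y^{-1}y^{x}$ is a bijection of $P_2$ and \emph{every} element of $P_2$ is a commutator $[y,x]$. Since $(P_1,P_2,P_3)$ is a tower, $P_2$ acts nontrivially on $P_3$. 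A commutator of composite order in a section of $G$ lifts to one of composite order in $G$, and $P_1\cap P_3=P_2\cap P_3=1$; I may therefore pass to a chief factor $V$ of $P_1P_2P_3$ lying inside $P_3$ on which $P_2$ acts nontrivially, without changing $P_1$ or $P_2$. Thus $V$ is an irreducible $\F_{p_3}[P_1P_2]$-module and it suffices to work in $P_2\ltimes V$ under the action of $x$.

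Write $\sigma$ for the automorphism of $P_2$ induced by $x$. Over $\overline{\F}_{p_3}$ decompose $V$ into $P_2$-weight spaces $V_\psi$. As $V$ is $P_1P_2$-irreducible and $P_2$ is normal abelian, Clifford's theorem shows that the occurring weights form a single $\langle x\rangle$-orbit $\chi,\chi^{\sigma},\dots,\chi^{\sigma^{t-1}}$ of a character $\chi$, nontrivial because $P_2$ acts nontrivially on $V$. The first decisive point is that $t\geq 2$: if $t=1$ then $\chi\circ\sigma=\chi$, hence $\chi$ is trivial on $[P_2,x]$; but $[P_2,x]=P_2$, so $\chi=1$, a contradiction. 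Hence $x$ permutes at least two distinct weight spaces.

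The second decisive point uses that $P_2$ is noncyclic: every character of a noncyclic abelian group has nontrivial kernel, so I can fix $b\in\ker\chi\setminus\{1\}$. Let $X$ and $B$ be the linear maps induced on $V$ by $x$ and $b$. Since $\chi(b)=1$ we have $V_\chi\subseteq C_V(b)$, so $V_\chi\cap[V,b]=0$; on the other hand $X$ maps $V_{\chi^{\sigma^{-1}}}$ isomorphically onto $V_\chi$ and $\chi^{\sigma^{-1}}\neq\chi$ (as $t\geq 2$), so $[V,x]=\Imm(X-I)$ has nonzero $V_\chi$-component. Therefore $[V,x]\not\subseteq[V,b]$.

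Finally I assemble the commutator. Writing $b=[y,x]$ and $h=ye$ with $e\in V$, a direct computation in $P_2\ltimes V$ shows that $[h,x]$ has $P_2$-component $b$ and that, as $e$ ranges over $V$, its $V$-component ranges over $\Imm(X-B)$; since $X-B=(X-I)-(B-I)$ and $[V,b]=\Imm(B-I)$, one has $\Imm(X-B)\subseteq[V,b]$ if and only if $[V,x]\subseteq[V,b]$. By the previous paragraph this fails, so I may choose $e$ with the $V$-component outside $[V,b]$. Then $[h,x]$ projects to $b\neq 1$ in $P_2$, so $p_2$ divides its order, and it is not a $p_2$-element, so $p_3$ divides its order; thus $[h,x]$ is a commutator whose order is divisible by two primes. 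The step I expect to be the real obstacle is securing $t\geq 2$: this is precisely where the fixed-point-free action furnished by $P_2=[P_2,P_1]$ is indispensable, for if some weight were $x$-invariant the whole construction would break down, and it is also here that the interplay between the noncyclicity of $P_2$ and the cyclicity of $P_1$ is felt.
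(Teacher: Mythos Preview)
The paper does not supply a proof of this lemma; it is quoted verbatim from \cite[Lemma~3.1]{MS1}. Your argument is essentially correct and stands on its own.

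Two small points. First, you assert $P_1\cap P_3=1$, but the tower axioms only guarantee $p_1\neq p_2$ and $p_2\neq p_3$; the case $p_1=p_3$ is allowed. This is harmless: after passing to the section you only use that the image of $P_1$ is cyclic, generated by the image of $x$, and acts on $P_2$ with trivial centralizer, all of which survive factoring out any normal subgroup contained in $P_3$ (since $P_2\cap P_3=1$ keeps $P_2$ and the $P_1$-action on it intact). Second, the phrase ``a direct computation in $P_2\ltimes V$'' is slightly misleading, since $[ye,x]$ is computed in $\langle x\rangle P_2V$; but the result $[ye,x]=b\cdot(X-B)e$ does land in $P_2V$, and your identification of the $V$-component is correct. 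The conclusion that $b\cdot w$ with $w\notin[V,b]$ is not a $p_2$-element is exactly the observation that the $p_2$-elements of $P_2V$ projecting to $b$ form the coset $b\,[V,b]$.

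Your two ``decisive points'' are indeed the crux: a $\sigma$-fixed weight would vanish on $[P_2,x]=P_2$, forcing it to be trivial; and noncyclicity of $P_2$ provides a nontrivial $b$ in $\ker\chi$, which separates $[V,x]$ from $[V,b]$ via the $V_\chi$-component.
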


The next lemma provides useful tool for handling  CPPO-groups. It is taken from \cite[Lemma 3.2]{MS1}.
\begin{lemma}\label{casoextra}
	Let $G$ be a CPPO-group containing a tower $(P_1,P_2,P_3)$ with the following properties:
	\begin{enumerate}
		\item $P_1$ is cyclic;
		\item $P_2$ is extraspecial and $C_{P_2}(P_1)=\Phi(P_2)$;
		\item $P_3$ is abelian and $P_3=[P_3,\Phi(P_2)]$.
	\end{enumerate}
	Then $p_2=2$ and $P_2$ is isomorphic to the quaternion group $Q_8$. 
\end{lemma}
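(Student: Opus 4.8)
The plan is to argue by contradiction: assuming $P_2\not\cong Q_8$ I will produce a commutator of $G$ whose order is divisible by two distinct primes, which is impossible in a CPPO-group. Since $Q_8$ is a $2$-group, the equality $p_2=2$ will then be automatic, so it suffices to prove $P_2\cong Q_8$.

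First I would collect the elementary consequences of the hypotheses. Set $\langle z\rangle=\Phi(P_2)=Z(P_2)=P_2'$ (of order $p_2$) and $V=P_2/\langle z\rangle$; fix a generator $x$ of $P_1$. By (ii), $[P_2,P_1]=P_2$ and $C_V(P_1)=1$, so $\bar v\mapsto \bar v^{\,x}-\bar v$ is invertible on $V$. By (iii), $[P_3,\Phi(P_2)]=[P_3,z]=P_3$, whence $C_{P_3}(z)=1$ by coprime action; consequently $C_{P_2}(P_3)$ is a normal subgroup of the $p_2$-group $P_2$ avoiding $Z(P_2)$, so $C_{P_2}(P_3)=1$ and $P_2$ acts faithfully on $P_3$. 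Next, since $P_2$ is extraspecial and $P_2\not\cong Q_8$, it contains a non-central element $c_0$ of order $p_2$ (for odd $p_2$ this always holds; for $p_2=2$, $Q_8$ is the unique extraspecial $2$-group all of whose non-central elements have order $4$). Then $\langle c_0,z\rangle$ is elementary abelian of rank $2$, and since such a group cannot act fixed-point-freely on the nontrivial $p_3$-group $P_3$ while $C_{P_3}(z)=1$, some $c:=c_0z^{j}$ has $C_{P_3}(c)\neq 1$. Thus $c$ is non-central of order $p_2$ with $[P_3,c]\subsetneq P_3$.

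The core of the proof is to realise $c$ as the ``$P_2$-part'' of a genuine commutator. Pick $a\in P_2$ with $\bar a=(\bar x-1)^{-1}\bar c$ in $V$ (this determines $a$ modulo $\langle z\rangle$); since $t\mapsto[a^{x},t]$ is a homomorphism $P_2\to\langle z\rangle$, surjective because $a^{x}\notin Z(P_2)$, there is $a'\in P_2$ with $a^{-1}(a^{x})^{a'}=c$. Put $\hat a=(a^{x})^{a'}=ac$. A direct computation then gives, for every $b\in P_3$, $[\,a,\, xa'b\,]=c\cdot w$ with $w=(b^{\hat a})^{-1}b\in P_3$. Because $P_3$ is abelian and $|c|=p_2$, raising to the $p_2$-th power collapses to $\mathbf N_c(w):=\prod_{i=0}^{p_2-1}w^{c^{i}}$; writing the action of $\langle P_1,P_2\rangle$ on $P_3$ additively as $\rho$, one has $w=(1-\rho(\hat a)^{-1})b$, and, using $\hat a=ac$ together with $\mathbf N_c\rho(c)^{-1}=\mathbf N_c$, this simplifies to $\mathbf N_c(w)=\mathbf N_c\bigl((1-\rho(a)^{-1})b\bigr)$. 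A standard coprimality argument identifies $\ker\mathbf N_c$ with $[P_3,c]$, which is proper. Hence, if some choice of $a$ (inside its fixed coset modulo $\langle z\rangle$) satisfies $[P_3,a]\not\subseteq[P_3,c]$, then choosing $b$ with $(1-\rho(a)^{-1})b\notin[P_3,c]$ makes $\mathbf N_c(w)\neq 1$; as $[\,a,xa'b\,]\equiv c\pmod{P_3}$ has $p_2$-part of order $p_2$, its order is then divisible by both $p_2$ and $p_3$, giving the contradiction.

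The one step I expect to be the main obstacle is precisely securing such an $a$ — and this is where hypothesis (iii) is essential. If instead $[P_3,az^{k}]\subseteq M:=[P_3,c]$ for all $k$, then each $\rho(az^{k})$ stabilises $M$ and acts trivially on $P_3/M$, so the ``ratio'' $\rho(az^{k})\rho(az^{l})^{-1}=\rho(z^{\,k-l})$ also acts trivially on $P_3/M$; thus $z$ acts trivially on $P_3/M$, i.e. $P_3=[P_3,z]\subseteq M$, contradicting $M\subsetneq P_3$. Hence a good $a$ exists, the contradiction goes through, and therefore $P_2\cong Q_8$ and $p_2=2$.
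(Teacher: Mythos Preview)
Your argument is correct. The paper itself does not reproduce a proof of this lemma; it simply cites it as \cite[Lemma~3.2]{MS1}, so there is no in-text proof to compare against. For the record, your line of attack is clean and self-contained: you locate a non-central element $c\in P_2$ of order $p_2$ with $[P_3,c]\subsetneq P_3$ (using that $P_2\not\cong Q_8$ guarantees a non-central element of order $p_2$, and that a rank-$2$ elementary abelian $p_2$-group cannot act fixed-point-freely on $P_3$), then manufacture a genuine commutator $[a,xa'b]=cw$ whose $p_2$-th power is the norm $\mathbf N_c(w)$. The key identity $\mathbf N_c\bigl((1-\rho(\hat a)^{-1})b\bigr)=\mathbf N_c\bigl((1-\rho(a)^{-1})b\bigr)$ via $\mathbf N_c\rho(c)^{-1}=\mathbf N_c$ is correct, and the identification $\ker\mathbf N_c=[P_3,c]$ is the standard splitting under coprime cyclic action. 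Your final step --- showing that some representative $a$ in the fixed $\langle z\rangle$-coset satisfies $[P_3,a]\not\subseteq[P_3,c]$ --- is exactly where hypothesis~(iii) enters, and your ratio argument $\rho(az^k)\rho(az^l)^{-1}=\rho(z^{k-l})$ is valid because $[P_3,az^k]\subseteq M$ forces $az^k$ to normalise $M$ and act trivially on $P_3/M$.

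One cosmetic remark: with the standard right-conjugation convention $b^{g}=g^{-1}bg$ one gets $w=(1-\rho(\hat a))b$ rather than $(1-\rho(\hat a)^{-1})b$; your formulas are consistent with taking $\rho$ to be the left-action homomorphism $\rho(g)\colon b\mapsto gbg^{-1}$, and either convention yields the same conclusion. It would help the reader to state this explicitly.
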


\begin{lemma}\label{MSmain}
Let $G$ be a soluble CPPO-group. Then $h(G)\leqslant3$ and $|\pi(G')|\leqslant h(G)$. Furthermore, set $K_1=G$ and $K_{i+1}=\gamma_\infty(K_i)$ for $i=1,2,\dots$. Then the orders of $G'/K_2$, $K_2/K_3$, and $K_3$ are prime powers.
\end{lemma}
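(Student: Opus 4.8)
The plan is to establish the three assertions in turn, invoking \cite{MS1} for the first and deriving the rest from a short self-contained argument. The bound $h(G)\leqslant 3$ — and with it $|\pi(G')|\leqslant 3$ — is precisely the main result of \cite{MS1}, which I would simply quote; recall that its proof runs through the tower machinery: one shows that a tower of height~$4$ inside a soluble CPPO-group would contain, after passing to suitable Frattini sections and using Lemma~\ref{cc}(iv) to keep the successive actions nontrivial, a sub-configuration of the kind forbidden by Lemma~\ref{abeliantower} or Lemma~\ref{casoextra}, producing a commutator of non-prime-power order, whence $h(G)\leqslant 3$ by Lemma~\ref{heightequaltower}. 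I do not intend to reprove this.

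Everything else rests on the following elementary \emph{diagonal commutator} observation. Suppose a group $T$ normalizes two elementwise-commuting subgroups $A_1,A_2$, where each $A_i$ is a nontrivial $q_i$-group with $q_1\neq q_2$, and suppose $[A_1,T]\neq 1\neq[A_2,T]$. Then $C_T(A_1)$ and $C_T(A_2)$ are proper subgroups of $T$, and since a group is never the union of two proper subgroups there is $t\in T$ outside both; picking $a_i\in A_i$ with $[a_i,t]\neq 1$ and setting $a=a_1a_2$ one has $[a,t]=[a_1,t]\,[a_2,t]$, because $[a_1,t]\in A_1$ commutes with $a_2$, and this is a product of two commuting elements whose orders are divisible by $q_1$ and by $q_2$ respectively, hence has order divisible by $q_1q_2$ — not a prime power. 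So if such a configuration arises among the commutators of $G$, or of any quotient of $G$ (quotients of CPPO-groups being CPPO-groups, as the image of an element of prime-power order has prime-power order), we obtain a contradiction.

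I would then apply this observation three times, using in addition the identity $[\gamma_\infty(H),H]=\gamma_\infty(H)$. First, $G/K_2$ is a nilpotent CPPO-group and $G'/K_2=(G/K_2)'$; if two primes $q_1,q_2$ divided $|G'/K_2|$ then the Sylow $q_i$-subgroups $A_i$ of $G/K_2$ would be nonabelian (the derived subgroup of a finite nilpotent group being the product of those of its Sylow subgroups), so $[A_i,G/K_2]\supseteq A_i'\neq 1$, and the observation with $T=G/K_2$ would yield a forbidden commutator in $G/K_2$. Second, writing $\bar G=G/K_3$ we have $K_2/K_3=\gamma_\infty(\bar G)$, a nilpotent normal subgroup of $\bar G$ satisfying $[K_2/K_3,\bar G]=K_2/K_3$; decomposing it into Sylow subgroups $A_q$ gives $[A_q,\bar G]=A_q$, so two primes dividing $|K_2/K_3|$ would produce a forbidden commutator in $\bar G=G/K_3$. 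Third, since $G$ has Fitting height at most~$3$, $K_2=\gamma_\infty(G)$ has Fitting height at most~$2$, so $K_3=\gamma_\infty(K_2)\leqslant F(K_2)$ is nilpotent and satisfies $[K_3,K_2]=K_3$; decomposing $K_3$ into its Sylow subgroups $A_q$, which are $K_2$-invariant, gives $[A_q,K_2]=A_q$, and two primes dividing $|K_3|$ would give a forbidden commutator between elements of $K_2\leqslant G$. Hence $G'/K_2$, $K_2/K_3$ and $K_3$ all have prime-power order.

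Finally, $|\pi(G')|\leqslant h(G)$ follows by counting. The lower nilpotent series $G=K_1\geqslant K_2\geqslant K_3\geqslant\cdots$ reaches $1$ in $h(G)$ steps, so $K_4=1$; and since $K_2=\gamma_\infty(G)\leqslant G'$ we get $\pi(G')=\pi(G'/K_2)\cup\pi(K_2/K_3)\cup\pi(K_3)$, a union of three sets, each of cardinality at most~$1$, of which the last $3-h(G)$ are empty — whence $|\pi(G')|\leqslant h(G)$. I expect the only real obstacle to be the importation of $h(G)\leqslant 3$ from \cite{MS1}; within the self-contained part, the one point that needs care is to have the nilpotency of $K_3$, and the fact that all the relevant Sylow subgroups are normalized by the acting group, firmly in place before the diagonal commutator observation is invoked.
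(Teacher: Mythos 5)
Your proposal is correct, and it takes a genuinely more self-contained route than the paper, which disposes of this lemma entirely by citation: $h(G)\leqslant 3$ and $|\pi(G')|\leqslant 3$ are quoted from \cite{MS1}, the prime-power assertions about $G'/K_2$, $K_2/K_3$ and $K_3$ are attributed to the proof of Proposition~3.8 there, and $|\pi(G')|\leqslant h(G)$ is then declared immediate. You import only $h(G)\leqslant 3$ and rebuild the rest, and the reconstruction holds up. The ``diagonal commutator'' observation is sound: a suitable $t$ exists because no group is the union of two proper subgroups, $[a_1a_2,t]=[a_1,t]^{a_2}[a_2,t]=[a_1,t][a_2,t]$ since $[a_1,t]$ lies in the $T$-invariant subgroup $A_1$, which centralizes $a_2$, and a product of commuting nontrivial elements of coprime prime-power orders is not of prime-power order. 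The three applications are also in order: in each case the relevant Sylow factors are characteristic in a nilpotent normal subgroup of the ambient (CPPO) group, hence invariant under the acting group and mutually centralizing, and either nonabelianness (for $G'/K_2$) or the identity $[\gamma_\infty(H),H]=\gamma_\infty(H)$ split across coprime factors (for $K_2/K_3$ and $K_3$) guarantees each factor is moved nontrivially; the nilpotency of $K_3$ follows, as you say, from $h(K_2)\leqslant h(G)-1\leqslant 2$. The closing count $|\pi(G')|\leqslant h(G)$ works because $K_2\leqslant G'$ and $K_{h(G)+1}=1$. What your route buys is that the reader need not unpack \cite{MS1}; it is also very much in the spirit of the paper, whose proof of Theorem~\ref{main} uses exactly this trick repeatedly (e.g.\ the commutator $[a_1x,a_2b]=[a_1,a_2][x,b]$ of order divisible by two primes). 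Your one-sentence sketch of why $h(G)\leqslant 3$ is only a gloss and would not stand as a proof, but since you explicitly quote that result rather than reprove it, nothing is missing.
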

\begin{proof} It is shown in \cite{MS1} that $h(G)\leqslant3$ and $|\pi(G')|\leqslant 3$. The claim about the orders of $G'/K_2$, $K_2/K_3$, and $K_3$ was essentially established in the proof of Proposition 3.8 in \cite{MS1}. The fact that $|\pi(G')|\leqslant h(G)$ is now immediate.
\end{proof}

Let $G$ be a finite soluble group. Recall that the group $G$ has a Sylow basis. This is a family of pairwise permutable Sylow $p_i$-subgroups
$P_i$ of $G$, exactly one for each prime, and any two Sylow bases are
conjugate. The system normalizer (also known as the basis normalizer) of such a Sylow basis in $G$ is $T=\cap_iN_G(P_i)$. 

The reader can consult \cite[pp. 235–240]{Doerk} or \cite[pp. 262--265]{rob}
for general information on system normalizers. In particular, we will use the facts
that the system normalizers are conjugate, nilpotent, and $G = T\gamma_\infty(G)$, where
$\gamma_\infty(G)$ denotes the intersection of the lower central series of $G$. Moreover, if $\gamma_\infty(G)$ is abelian then $T\cap\gamma_\infty(G)=1$.
Furthermore, system normalizers are preserved by epimorphisms, that is, the image of $T$ in
any quotient $G/N$ is a basis normalizer of $G/N$. 

\section{Proof of Theorem \ref{main}}

\begin{proof} 
	Suppose $G$ is a counterexample with minimal possible order. In view of Lemma \ref{MSmain} the Fitting height of $G$ is precisely $3$. Set $K_1=G$, $K_2=\gamma_\infty(K_1)$, and $K_3=\gamma_\infty(K_2)$.
	Choose a Sylow basis $\{P_j\}$ of $G$ and let $T_i$ be the system normalizer of the induced Sylow basis $\{P_j\cap K_i\}$ of $K_i$. As $h(G)=3$, it follows that $G=T_1T_2T_3$. Observe that $T_2T_3=\gamma_\infty(G)$ and $T_3=\gamma_\infty(\gamma_\infty(G))$.
	
Since $G$ is a counterexample of minimal order, $T_3$ is a minimal normal subgroup of $G$. Hence, $T_3$ is an elementary abelian $p$-group for some prime number $p$. It follows that $T_2\cap T_3=1$. On the other hand, $K_2/T_3$ is a $q$-group for some prime number $q\neq p$, as shows Lemma \ref{MSmain}. We conclude that $T_2$ is a Sylow $q$-subgroup of $\gamma_\infty(G)$. Moreover, $T_3=[T_3,T_2]$. 
	
By Lemma \ref{MSmain} the derived subgroup of $G/K_2$ is an $r$-group for some prime number $r$. Consequently the Sylow $r$-subgroup $R$ of $T_1$ is not abelian. We claim that $[T_2,R]\neq1$. Indeed, remark that $[T_2,T_1]\neq1$ and suppose $[T_2,R]=1$. There are elements $a_1,a_2\in R$, an $r'$-element $x\in T_1$ and $b\in T_2$ such that $[a_1,a_2]\neq 1$ and $[x,b]\neq 1$. But then the commutator $[a_1x,a_2b]=[a_1,a_2][x,b]$ has order divisible by both $q$ and $r$, which yields a contradiction. Therefore $[T_2,R]\neq1$ and by minimality of $G$ we obtain that $G=RT_2T_3$.
	
Let us show that $R$ is 2-generated and has nilpotency class 2. Let $n$ be the largest non-negative integer satisfying the property that $[T_2,Z_n(R)]=1$, where $Z_n(R)$ stands for the $n$th term of the upper central series of $R$. Observe that $Z_n(R)<R$. Choose $a_1\in Z_{n+1}(R)$ such that $[T_2,a_1]\neq1$ and $b\in T_2$ such that $[a_1,b]\neq1$. If $n\geq1$, then there is an element $a_2\in R$ such that $[a_1,a_2]\neq1$. The equality $[a_1,a_2b]=[a_1,b][a_1,a_2]$ shows that $G$ contains a commutator whose order is divisible by both $q$ and $r$. This contradiction shows that $n=0$. 
	
Recall that $R$ is not abelian. Choose $a_1\in Z_2(R)\setminus Z(R)$ and $a_2\in R$ such that $a=[a_1,a_2]\neq1$. Of course, $a\in Z(R)$. We will now show that
	$$C_{K_2}(a)=C_{K_2}(\langle a_1,a_2\rangle).$$
Let $x\in C_{K_2}(a)$. We have $[xa_1,a_2]=[x,a_2]^{a_1}a$. Since this has prime power order, it follows that $[x,a_2]=1$ and $x\in C_{K_2}(a_2)$. Similarly one obtains that $x\in C_{K_2}(a_1)$. So indeed $C_{K_2}(a)=C_{K_2}(\langle a_1,a_2\rangle)$. 

If $[T_2,a]=1$, then also $[T_2,a_1]=[T_2,a_2]=1$. Taking $z\in Z(R)$ and $b\in T_2$ such that $[z,b]\neq 1$, observe that the order of the commutator 
	$$[a_1z,a_2b]=[a_1,a_2b]^z[z,a_2b]=[a_1,a_2][z,b]$$ is not a prime power. This is a contradiction. 
Therefore $[T_2,a]\neq1$. We see that the subgroup $\langle a_1,a_2\rangle T_2T_3$ is also a counterexample to the theorem. In view of minimality of $G$ conclude that $R=\langle a_1,a_2\rangle$ has nilpotency class 2. 
	
Remark that $G/C_{T_2}(T_3)$ is also a counterexample to the theorem so because of minimality of $G$ we deduce that $C_{T_2}(T_3)=1$. Therefore $T_2$ does not contain nontrivial proper subgroups $K$ such that $K=[K,R]=[K,a]$. In particular, $T_2=[T_2,a]$ and $T_2$ is its Thompson's critical subgroup (see \cite{Gorenstein}, pp. 185-186). Hence the nilpotency class of $T_2$ is at most 2.
	
It is not difficult to check that $T_2$ is not abelian. Indeed, suppose the opposite and note that in this case $T_2$ is elementary abelian. If $T_2$ is a noncyclic group, the tower $(\langle a\rangle,T_2,T_3)$ satisfies the conditions of Lemma \ref{abeliantower} and so $G$ contains a commutator whose order is not a prime power, a contradiction. Thus, $T_2$ is a cyclic group of prime order. Recall that the automorphism group of any cyclic group is abelian. Hence, $Aut(T_2)$ is abelian and so $R/C_{R}(T_2)$ is abelian. But then we obtain that $[T_2,a]=1$, a contradiction. This proves that $T_2$ is not an abelian group. 
	
As $\Phi(T_2)=C_{T_2}(a)$, we have $Z(T_2)=T_2'=\Phi(T_2)$.
We claim that $\Phi(T_2)$ is a cyclic group. Since $RT_2$ acts irreducibly on $T_3$, the centre of $RT_2/C_{RT_2}(T_3)$ is cyclic. Taking into account that $\Phi(T_2)\leq Z(RT_2)$ and using again that $C_{T_2}(T_3)=1$ we get that $Z(T_2)$ is a cyclic group, as claimed. 
	
We will now separately consider the two cases where the prime $q$ is odd or $q=2$. 
Suppose that $q$ is an odd prime. In this case $T_2$ has exponent $q$, that is, $T_2=\Omega_1(T_2)$ (cf. \cite[pp. 184]{Gorenstein}). Therefore, $T_2$ is an extraspecial $q$-group. Using that $T_3=[T_3,\Phi(T_2)]$ and applying Lemma \ref{casoextra} to the tower $(\langle a\rangle, T_2,T_3)$ we get a contradiction.
	
So assume that $q=2$. Suppose $R$ centralizes every involution of $T_2$. Then all of them are contained in $\Phi(T_2)$. Since $\Phi(T_2)$ is cyclic, it follows that $T_2$ contains only one involution. Therefore $T_2$ is a (generalised) quaternion group. Since $Z(T_2)=T_2'$ we conclude that $T_2\cong Q_8$. As $Aut(Q_8)\cong S_4$, we get that $r=3$ and $R/C_{R}(T_2)$ has order 3. It follows that $[T_2,a]=1$, a contradiction. 
	
Therefore $R$ does not centralize some involution $x\in T_2$. Let $Q=\langle x^g;~g\in R\rangle$. So $Q$ is a subgroup of $T_2$ generated by involutions. As $1\neq[R,Q]\leq Q$, by minimality $Q=T_2$. Let $\{x_1,\ldots,x_k\}$ be the orbit of $x$ under the action of $R$. We have $T_2=\langle x_1,\ldots,x_k\rangle$ and  $T_2'=\langle [x_i,x_j],~i,j=1,\ldots,k\rangle$. Any commutator $[x_i,x_j]$ is an involution and consequently $T_2'$ is an elementary abelian group. Because $T_2'$ is cyclic, it follows that $T_2$ is extraspecial. Using again Lemma \ref{casoextra} we conclude that $T_2\cong Q_8$. However, this contradicts the fact that $T_2$ is generated by involutions. This is the final contradiction.
\end{proof}

\end{document}